\newtheorem{theorem}{Theorem}
\newtheorem{lemma}[theorem]{Lemma}
\newtheorem{proposition}[theorem]{Proposition}
\newtheorem*{claim*}{Claim}
\theoremstyle{remark}
\newtheorem{problem}[theorem]{Problem}
\newtheorem{remark}[theorem]{Remark}
\newtheorem{conjecture}[theorem]{Conjecture}
\renewenvironment{proof}[1][\relax]%
  {\pushQED{\qed}
   \paragraph{Proof\ifx#1\relax.\else~of #1\fi}}%
  {~\hfill\popQED\par\medskip}
\renewcommand{\O}{\Omega}
\renewcommand{\d}{\delta}
\newcommand{\Z}{\mathbb{Z}}
\newcommand{\g}{\gamma}
\newcommand{\DD}{\mathbb{D}}
\newcommand{\dist}{\operatorname{dist}}
\DeclareMathOperator{\bdist}{box dist}
\newcommand{\SLE}{\textrm{SLE}}
\newcommand{\N}{\mathbb{N}}
\renewcommand{\P}{\mathbb{P}}
\newcommand{\hrefnlu}[2]{\href{#1}{\nolinkurl{#2}}}
\title{Supercritical self-avoiding walks are space-filling}
\author{Hugo Duminil-Copin, Gady Kozma and Ariel Yadin}
\begin{document}

\date{}

\maketitle

\section{Introduction}

In 1953, Paul Flory \cite{Flory} proposed considering self-avoiding walks (\emph{i.e.}\ visiting every vertex at most once) on a lattice as a model for polymer chains. Self-avoiding walks have turned out to be a very interesting object, leading to rich mathematical theories and challenging questions; see \cite{MadrasSlade,hugo}.

Denote by $c_n$ the number of $n$-step self-avoiding walks on the hypercubic lattice ($\mathbb Z^d$ with edges between nearest neighbors) started from some fixed vertex, \emph{e.g.}\ the origin. Elementary bounds on $c_n$ (for instance $d^n\leq c_n\leq 2d(2d-1)^{n-1}$) guarantee that $c_n$ grows exponentially fast. Since an $(n+m)$-step self-avoiding walk can be uniquely cut into an $n$-step self-avoiding walk and a parallel translation of an $m$-step self-avoiding  walk, we infer that $c_{n+m}\leq c_nc_m$,
from which it follows that there exists $\mu=\mu(d)\in(0,+\infty)$ such that $\mu:=\lim_{n\rightarrow \infty}c_n^{1/n}$. The positive real number $\mu$ is called the \emph{connective constant} of the lattice. The connective constant can be approximated in a number of ways, yet no closed formula exists in general. In the case of the hexagonal lattice, it was recently proved to be equal to $\sqrt{2+\sqrt 2}$ in \cite{DS10}.

As it stands, the model has a strong combinatorial flavor. A more
geometric variation was suggested by Lawler, Schramm and Werner
\cite{LSW5}. Let us describe their construction now --- they were
interested in the two-dimensional case, but here we will not make this restriction.
Let $\O$ be a simply connected domain in $\mathbb R^d$ with two points
$a,b$ on the boundary. For $\d>0$, let $\O_\d$ be the largest
connected component of $\O \cap \d\mathbb Z^d$ and let $a_\d$, $b_\d$ be the two sites of $\O_\d$ closest to $a$ and $b$ respectively. 
We think of $(\O_\d,a_\d,b_\d)$ as being an approximation of
$(\O,a,b)$. See figure \ref{fig:D_delta}.

\begin{figure}
\begin{center}
\includegraphics[width=0.50\textwidth]{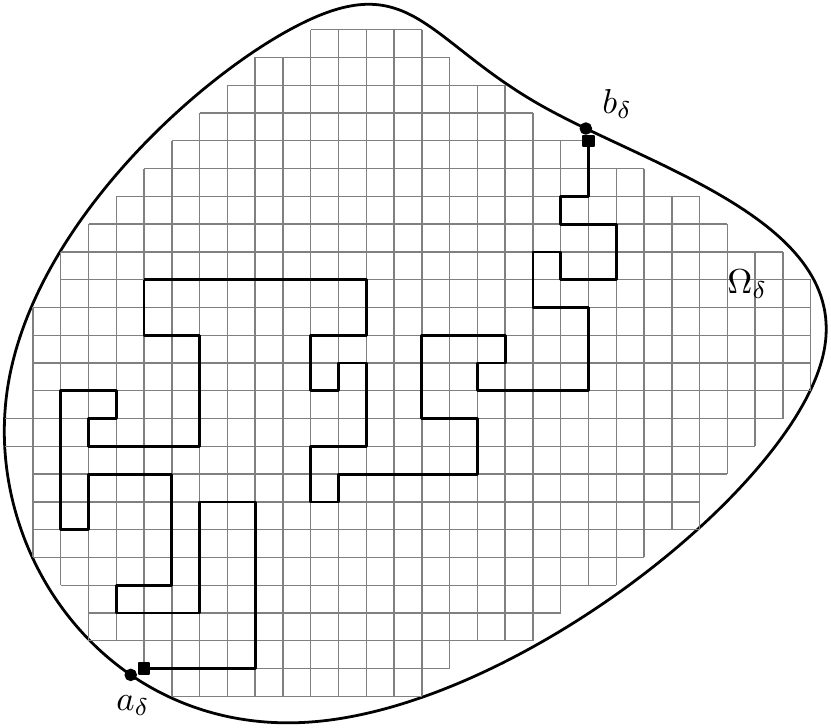}
\end{center}
\caption{\label{fig:D_delta}A domain $\O$ with two points $a$ and $b$
  on the boundary (circles) and the graph $\Omega_\d$. The points $a_\d$
  and $b_\d$ are depicted by squares. An example of a possible walk from $a_\d$ to $b_\d$ is presented. Note that there is a finite number of them.}
\end{figure}

Let $x>0$. On $(\O_\d,a_\d,b_\d)$, define a probability measure on the finite set of self-avoiding walks in $\O_\d$ from $a_\d$ to $b_\d$ by the formula
\begin{equation} \mathbb P_{(\O_\d,a_\d,b_\d,x)}(\gamma)~=~\frac{x^{|\g|}}{Z_{(\O_\d,a_\d,b_\d)}(x)}\end{equation}
where $|\g|$ is the length of $\g$ ({\em i.e.}\ the number of edges), and $Z_{(\O_\d,a_\d,b_\d)}(x)$ is a normalizing factor. A random curve $\g_\d$ with law $\P_{(\O_\d,a_\d,b_\d,x)}$ is called the \emph{self-avoiding walk} with parameter $x$ in $(\Omega_\d,a_\d,b_\d)$.
The sum $Z_{(\O_\d,a_\d,b_\d)}(x)=\sum_{\g}x^{|\g|}$ (with the sum taken over all self-avoiding walks in $\O_\d$ from $a_\d$ to $b_\d$) is sometimes called the
{\em partition function} (or {\em generating function}) of self-avoiding walks from $a_\d$ to $b_\d$ in the domain $\O_\d$.

When the domain $(\Omega,a,b)$ is fixed, we are interested in the scaling limit of the family $(\g_\d)$, {\em i.e.}\ its geometric behavior when $\d$ goes to $0$. The qualitative behavior is expected to differ drastically depending on the value of $x$. A phase transition occurs at the value $x_c=1/\mu$, where $\mu$ is the connective constant:

\paragraph{When $x<1/\mu$:} $\g_\d$ converges to a
deterministic curve corresponding to the geodesic between $a$ and $b$
in $\O$ (assuming it is unique --- otherwise some adaptations need to
be done). When rescaled, $\g_\d$ should have Gaussian fluctuation of
order $\sqrt \delta$ around the geodesic. The strong results of Ioffe \cite{Ioffe} on the unrestricted self-avoiding
walk would be a central tool for proving such a statement, though we are not aware of a reference for the details.

\paragraph{When $x=1/\mu$:} $\g_\d$ should converge to a random simple
curve. In dimensions four and higher, the limit is believed to be
a Brownian excursion from $a$ to $b$ in the domain $\Omega$. This is heuristically related to a number of
rigorously proved results: in dimensions five and above to the work of
Brydges and Spencer \cite{BS85} and Hara and Slade
\cite{HaraSlade1,HaraSlade2} who showed that unrestricted
self-avoiding walk converges to Brownian motion (see also the book
\cite{MadrasSlade}). 
Dimension four, the so-called {\em upper critical dimension}, is much
harder, but recently some impressive results have been achieved using
a supersymmetric renormalization group approach. These results are
limited to continuous time weakly self-avoiding walk, see
\cite{BrydgesImbrieSlade,BS10,BDS11} and references within. 

In dimension two, the scaling limit is conjectured to be the Schramm-L\"owner Evolution of parameter $\nicefrac{8}{3}$, and
in fact it was pointed out that this is true {\em if} the scaling limit exists as a continuous curve and is
conformally invariant \cite{LSW5}. 

Finally, dimension three remains a mystery, and there is no clear candidate for the scaling limit of self-avoiding walk.

\paragraph{When $x>1/\mu$:} $\g_\d$ is expected to become space-filling in the following sense: for any open set $U\subset \Omega$,
$$\P_{(\O_\d,a_\d,b_\d,x)}[\g_\delta\cap U=\emptyset]\rightarrow 0$$
when $\d$ goes to 0. On the one hand, let us mention that it is not clear in which sense (if any) $(\g_\d)$ has a scaling limit when $d\ge 3$. On the other hand,
the scaling limit is predicted \cite[Conjecture 3]{Sm2} to exist in dimension two (in the case of the hexagonal lattice at least). It should be the Schramm-L\"owner Evolution of parameter 8, which is conformally invariant.

One cannot hope that $\g_\d$ would be space-filling in the strictest
possible sense, namely that every vertex is visited. Nevertheless,
one can quantify the size of the biggest hole not visited by the
walk. The subject of this paper is the proof of a result which
quantifies how $\g_\d$ becomes space filling. Here is a precise
formulation. 
\begin{theorem}\label{space filling improved}
Let $\DD$ be the unit disk and let $a$ and $b$ be two points on its boundary. For every $x>1/\mu$, there exist $\xi=\xi(x)>0$ and $c=c(x)>0$ such that
$$\P_{(\DD_\d,a_\d,b_\d,x)}\big[\text{there exists a component of
  }\DD_\d\setminus \Gamma_\d^\xi\text{ with cardinality }> c\log
  (1/\d)\big]\rightarrow 0$$
when $\d\rightarrow 0$, where $\Gamma_\d^\xi$ is the set of sites in $\DD_\d$ at graph distance less than $\xi$ from $\g_\d$.
\end{theorem}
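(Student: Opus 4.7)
The plan is to reduce Theorem~\ref{space filling improved} to a single-region exponential estimate and then apply a union bound over potential holes. The key estimate to establish is: for $x > 1/\mu$, one can choose $\xi = \xi(x) > 0$ and constants $\alpha, C > 0$ such that, for every connected $S \subset \DD_\delta$ of cardinality $k$,
$$\P_{(\DD_\delta, a_\delta, b_\delta, x)}\bigl[\gamma_\delta \cap N_\xi(S) = \emptyset\bigr] \leq C e^{-\alpha k},$$
where $N_\xi(S) := \{v \in \DD_\delta : \dist(v, S) \leq \xi\}$. Granting this, any component of $\DD_\delta \setminus \Gamma_\delta^\xi$ of size $k$ is a connected set $S$ with $\gamma_\delta \cap N_\xi(S) = \emptyset$. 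The number of connected subsets of $\DD_\delta$ of size $k$ is at most $C_1 \delta^{-2} B^k$ by the standard lattice-animal bound (for absolute constants $C_1, B$). A geometric sum over $k > c \log(1/\delta)$ then yields the theorem, provided $\alpha > \log B$ and $c(\alpha - \log B) > 2$.

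For the single-region estimate, the probability in question is the ratio $Z_{\DD_\delta \setminus N_\xi(S)}(a_\delta, b_\delta; x)/Z_{\DD_\delta}(a_\delta, b_\delta; x)$, so the task is to prove $Z_{\DD_\delta} \geq e^{\alpha k}\, Z_{\DD_\delta \setminus N_\xi(S)}$. I would use an entropy/surgery argument: given a SAW $\gamma$ avoiding $N_\xi(S)$, construct a rich family of longer SAWs in $\DD_\delta$ by splicing a detour into $\gamma$ at its closest approach to $S$, consisting of a short bridge of length at most $\xi$ from $\gamma$ through $N_\xi(S)$ to $\bd S$, a long self-avoiding walk traversing at least $c' k$ vertices inside $S$, and a reverse bridge back. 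Each added edge contributes a factor $x$, and when $S$ is ``thick'' (e.g.\ contains a square of side $\asymp \sqrt k$) the number of available long SAWs inside $S$ is at least $\mu^{c' k(1-o(1))}$. Since $x\mu > 1$, this provides an entropy gain $(x\mu)^{c'k}$ in the partition function, which dominates the polynomial multiplicity of the inverse of the splicing map.

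The main obstacle is that a connected $S \subset \Z^2$ of cardinality $k$ need not contain a self-avoiding path of length $\asymp k$ (consider a thin dendritic tree). I would handle this via a dichotomy: if $S$ contains a thick subset, run the surgery above; otherwise $S$ has large perimeter, so the $\xi$-neighborhood satisfies $|N_\xi(S)| \geq c'' \xi \cdot k$, and the ratio of partition functions can be controlled by removing the vertices of $N_\xi(S)$ one by one. To implement the latter I would prove a local comparison $Z_{\DD_\delta}(x) \geq (1 + \eta)\, Z_{\DD_\delta \setminus \{v\}}(x)$ for each sufficiently interior vertex $v$, with $\eta = \eta(x) > 0$ obtained by a single-vertex splicing that again exploits $x\mu > 1$. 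This is the most delicate point, because one must guarantee that enough SAWs in a typical subdomain pass close to $v$ for the splicing to apply; I expect a suitable iterative formulation (peeling $N_\xi(S)$ in an order that keeps the remaining domain well-connected) will make it go through. Combining both cases yields the single-region estimate and hence the theorem.
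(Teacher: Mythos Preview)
Your overall architecture (an exponential single-region bound followed by a lattice-animal union bound) is exactly the paper's. Where you diverge is in the surgery that produces the exponential gain, and that is where the gap lies. The paper does \emph{not} try to thread a long detour through the hole $S$; instead it fixes a box scale $m=m(x)$ with $Z_m(x)$ large (Proposition~\ref{proposition:loop}), covers any connected $S$ by at least $|S|/(2m+1)^2$ disjoint $m$-boxes, and splices into $\gamma$ a single self-avoiding polygon that visits every box (Proposition~\ref{proposition:boxes} and its Claim). This yields a gain $Z_m(x)^{|F|}$, uniform in the \emph{shape} of $S$, with a one-shot, bounded-multiplicity symmetric-difference map. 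In particular, no thick/thin dichotomy is needed.

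Your thick case is close in spirit to the paper (and would use the same ``many squared walks'' input, Lemma~\ref{lemma:number of rectangular SAWs}), but your thin case is not an argument. The assertion $Z_{D}(x)\geq(1+\eta)\,Z_{D\setminus\{v\}}(x)$ for ``sufficiently interior'' $v$ is not established, and after peeling many vertices the remaining domain is no longer the disk: it can acquire bottlenecks and dead ends for which the inequality simply fails (if $v$ lies in a cul-de-sac on the far side of a cut vertex from $a_\d,b_\d$, removing $v$ changes nothing). Saying one will peel ``in an order that keeps the remaining domain well-connected'' postpones the difficulty rather than resolving it; maintaining any uniform $\eta>0$ through $\Theta(k)$ removals would require control you have not supplied. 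A second, smaller issue: your stated single-region event ``$\gamma_\delta\cap N_\xi(S)=\emptyset$'' does not by itself force $\gamma_\delta$ to come within $\xi$ of $S$, so the ``short bridge'' is not available; you implicitly need the stronger event that $S$ is a \emph{maximal} component of $\DD_\delta\setminus\Gamma_\delta^\xi$, which the paper encodes via ``box distance $=1$''. The clean fix for everything is to drop the dichotomy and adopt the box-polygon surgery: once $m$ is chosen so that $Z_m(x)$ exceeds the lattice-animal growth constant, the rest is the union bound you already wrote.
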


The theorem is stated for the unit disk to avoid various connectivity
problems. Indeed, assume that at some given scale $\d$ our domain
$\Omega$ has a part which is connected by a ``bridge'' of width
$\d$. Then the graph $\O_\d$ will have a large part connected by a
single edge, which does not leave the self-avoiding walk enough space
to enter and exit. Thus an analog of theorem \ref{space filling improved} will not
hold for this $\Omega$. It is not difficult to construct a single domain
$\O$ with such ``mushrooms'' in many scales. See the figure on the
right. In order to solve this issue,
one can start from an arbitrary domain and expand it
microscopically. This gives rise to the following formulation:
\newdimen\wide
\wide=11.8cm
\newdimen\myht
\myht=4.3cm
\parshape 6 0pt \wide 0pt \wide 0pt \wide 0pt \wide 0pt \wide 0pt \hsize
\vadjust{\kern -\myht \vtop to \myht {\hbox to \hsize{\hss\includegraphics{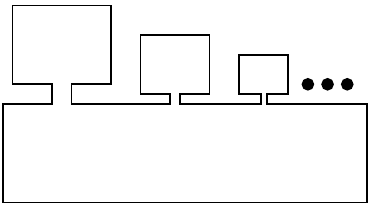}}}}

\begin{theorem}\label{space filling expand}
Let $(\Omega,a,b)$ be a bounded domain with two points on the boundary. For every $x>1/\mu$, there exist $\xi=\xi(x)>0$ and $c=c(x)>0$ such that
$$\P_{((\O+B(\xi\d))_\d,a_\d,b_\d,x)}\big[\exists\text{ a component of
  }(\O+B(\xi\d))_\d\setminus \Gamma_\d^\xi\text{ larger than }c\log
  (1/\d)\big]\rightarrow 0$$
when $\d\rightarrow 0$, where $\Gamma_\d^\xi$ is the set of sites in $(\O+B(\xi\d))_\d$ at graph distance less than $\xi$ from $\g_\d$.
\end{theorem}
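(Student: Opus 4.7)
The plan is to reduce Theorem \ref{space filling expand} to Theorem \ref{space filling improved} by a localization argument based on the domain Markov property of the SAW measure. Morally, one covers $\O$ by macroscopic Euclidean balls $B_i$; inside each $B_i$, the trace of $\g_\d$ is, conditionally on what happens outside, a product of independent SAW measures on sub-regions of $B_i$, to which the disk result directly applies.

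Concretely, set $\O^\d := \O + B(\xi\d)$ and let $\g_\d \sim \P_{(\O^\d_\d, a_\d, b_\d, x)}$. Fix a small Euclidean radius $r = r(x,\O) > 0$ and a finite bounded-overlap cover of $\overline{\O}$ by balls $B_i = B(z_i, r)$, such that the half-radius copies $B_i' = B(z_i, r/4)$ still cover $\overline{\O}$. A union bound over the $O(1)$ balls in the cover reduces the theorem to showing that, for each fixed~$i$,
\[
\P\big[\text{some component of } \O^\d_\d \setminus \Gamma_\d^\xi \text{ meeting } B_i' \text{ has cardinality } > c\log(1/\d)\big] = o(1).
\]
Fix $i$ and condition on $\F_i$: the restriction of $\g_\d$ to $\O^\d_\d \setminus B_i$ together with the ordered sequence of entry/exit points on $\partial B_i$ of the excursions of $\g_\d$ into $B_i$. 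Because the SAW weight is $x^{|\g|}$, the conditional law of $\g_\d \cap B_i$ is a product of independent SAW measures with parameter $x$: each excursion is distributed as a SAW between its two endpoints, in the appropriate connected component of $B_i \cap \O^\d_\d$ with the sites used by $\F_i$ and the other excursions removed.

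Since $B_i$ is itself a disk, and the microscopic expansion by $\xi\d$ in the definition of $\O^\d$ prevents ``mushroom'' pathologies even in the SAW-carved sub-domains, Theorem \ref{space filling improved} should apply to each excursion's sub-domain (or, equivalently, to the ambient disk $B_i$ after using SAW monotonicity to absorb the extra obstacles). Provided at least one excursion reaches $B_i'$, this yields the desired bound on hole sizes inside $B_i'$. The complementary event ``no excursion enters $B_i'$'' is itself ruled out by applying Theorem \ref{space filling improved} in the larger concentric disk $B(z_i, 2r) \cap \O^\d$: in that case $\g_\d$ together with $\partial \O^\d$ would isolate $B_i'$, producing a hole of macroscopic size that contradicts the disk theorem on that larger ball.

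The main obstacle is transferring Theorem \ref{space filling improved} from the perfect disk with two marked boundary points to the irregular SAW-carved sub-domains produced by the domain-Markov conditioning. One needs either a robust form of the disk theorem valid for $\xi\d$-regular sub-domains of the disk with two marked boundary points, or a SAW-monotonicity/coupling bound reducing hole probabilities in the sub-domain to the corresponding event in the ambient disk. This is exactly the step where the expansion by $\xi\d$ is essential: without it, the sub-domains could develop mushrooms at arbitrary scales down to $\d$, breaking the comparison; with it, a SAW monotonicity argument (adding or removing forbidden sites only changes the hole probability in a controlled direction) should suffice to close the reduction.
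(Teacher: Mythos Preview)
Your approach is genuinely different from the paper's and has real gaps. The paper does not reduce Theorem~\ref{space filling expand} to Theorem~\ref{space filling improved}; instead both are one-line corollaries of a more general statement, Theorem~\ref{thm:connected}, which gives the conclusion for \emph{any} $\O_\d$ whose family of $m$-boxes $\mathcal F(\O_\d,m)$ is connected. The proof of Theorem~\ref{space filling expand} then consists entirely of verifying this connectivity hypothesis for the expanded domain: with $\xi=6m$, any continuous path in $\O$ fattened by $6m\d$ contains a chain of adjacent $m$-boxes, so $\mathcal F((\O+B(6m\d))_\d,m)$ is connected. No domain Markov property, no covering by balls, and no monotonicity are used.

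Your localization strategy has two concrete problems. First, the domain Markov property does not yield a product measure: conditioned on $\g_\d$ outside $B_i$ and on the entry/exit points, the excursions inside $B_i$ must still be \emph{mutually} avoiding, so their joint law is not a product of independent SAW measures (the phrase ``with \ldots\ the other excursions removed'' is circular, since those excursions are themselves being sampled). Second, and more fundamentally, even after conditioning on all excursions but one, the remaining excursion lives in an irregular SAW-carved sub-region of a disk, not in a disk; controlling holes there requires precisely a general-domain result of the type of Theorem~\ref{thm:connected}, which is what you are trying to circumvent. The ``SAW monotonicity'' you invoke to close this gap is not an established property of the model---adding forbidden sites can force the walk into configurations that create larger holes elsewhere---so the reduction does not go through as written. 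The microscopic expansion by $\xi\d$ does prevent mushrooms along $\partial\O$, but it says nothing about the geometry of the random carved boundary produced by conditioning on pieces of $\g_\d$.
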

Here $\O+B(\xi\d)$ is $\O$ expanded by $\xi\d$ i.e.
\[
\O+B(\xi\d)=\{z:\dist(z,\O)<\xi\d\}.
\]
Since $\xi$ depends only on $x$, and $\d\to 0$, this is a microscopic
expansion.\parshape 0
\bigbreak
The strategy of the proof is fairly natural. We first prove that in
the supercritical phase, one can construct a lot (compared to their energy) of self-avoiding polygons in a prescribed box. Then, we show that the self-avoiding walk cannot leave holes that are too large, since adding polygons in the big holes to the self-avoiding walk would increase the entropy drastically while decreasing the energy in a reasonable way. In particular, a comparison energy/entropy shows that self-avoiding walks leaving big holes are unlikely. We present the proof {\bf only in the case $d=2$}, even though the reasoning carries over to all dimensions without difficulty (see Remark~\ref{generalization}). One can also extend the result to other lattices with sufficient symmetry in a straightforward way (for instance to the hexagonal lattice).

\section{Self-avoiding polygons in a square}

In this section, we think of a walk as being indexed by (discrete)
time $t$ from 0 to $n$. For $m>0$, let $P_m$ be the set of
self-avoiding polygons in $[0,2m+1]^2$ that touch the middle of every
face of the square: more formally, such that the edges
$[(m,0),(m+1,0)]$, $[(2m+1,m),(2m+1,m+1)]$, $[(m,2m+1),(m+1,2m+1)]$
and $[(0,m),(0,m+1)]$ belong to the polygon, see figure \ref{fig:construction_loops}. For $x>0$, let $Z_m(x)$ be the partition function (with parameter $x$) of $P_m$, \emph{i.e.}
$$Z_m(x)~=~\sum_{\g\in P_m}x^{|\g|}.$$

\begin{figure}
\begin{center}
\includegraphics[width=0.60\textwidth]{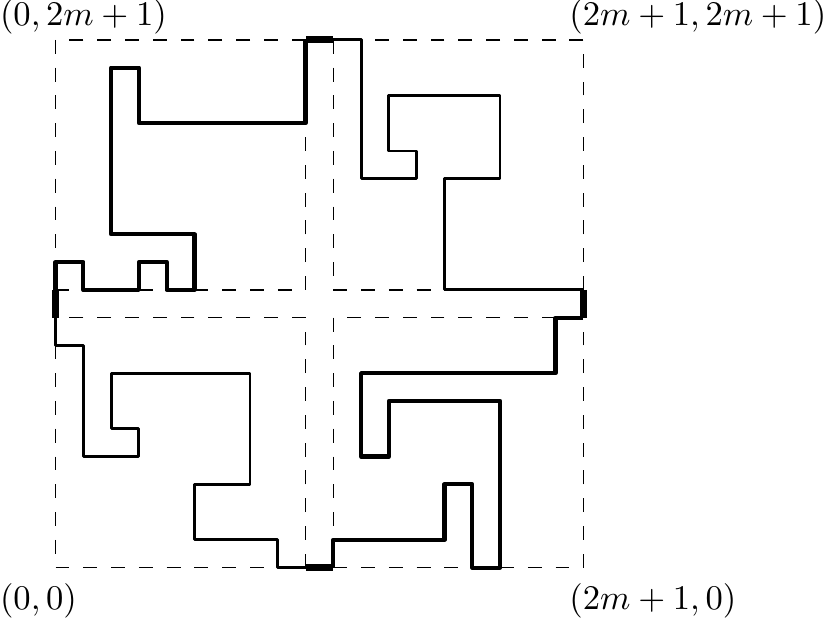}
\end{center}
\caption{\label{fig:construction_loops}by concatenating four walks in squares of size $m$ (plus four edges), one obtains an element of $P_m$, i.e. a loop in the square of size $2m+1$ going through the middle of the sides.}
\end{figure}

\begin{proposition}\label{proposition:loop}
For $x>1/\mu$, we have $\limsup_{m\rightarrow \infty} Z_m(x)~=~\infty$.
\end{proposition}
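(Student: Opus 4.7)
}

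The natural first step is the construction suggested by figure \ref{fig:construction_loops}. I would split $[0,2m+1]^2$ into the four disjoint $m\times m$ sub-squares, and observe that any choice of four SAWs, one in each sub-square, joining the ``far'' endpoints of the four prescribed middle edges, can be concatenated (with these four edges) into a polygon belonging to $P_m$. Since the four sub-squares are disjoint, there is no self-avoidance interaction between the four SAWs, so the construction yields
\[
Z_m(x)\;\geq\; x^4\, B_m(x)^4,
\]
where $B_m(x)$ is the partition function of SAWs in $[0,m]^2$ joining two diagonally opposite corners (say $(0,m)$ to $(m,0)$); by the $D_4$-symmetry of the square all four sub-squares contribute the same $B_m(x)$. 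It therefore suffices to prove that $\limsup_{m\to\infty} B_m(x)=\infty$ for every $x>1/\mu$.

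The route I would take for the key lemma $\limsup_m B_m(x)=\infty$ is to exploit the divergence of the full SAW generating function at supercritical $x$. Fix $y\in(1/\mu,x)$ so that $\sum_n c_n y^n=\infty$ and in fact $c_n y^n\to\infty$ exponentially (using $c_n\geq \mu^n$). Denote by $c_n(u)$ the number of $n$-step SAWs from the origin ending at $u\in\Z^2$; since $c_n=\sum_u c_n(u)$ and every $n$-step SAW lives in the ball of radius $n$, pigeonholing over the at most $(2n+1)^2$ possible endpoints produces $u_n^\star$ with $c_n(u_n^\star)\geq c_n/(2n+1)^2$. Using the lattice symmetries (the dihedral group of $\Z^2$), one can further arrange that $u_n^\star$ lies on the diagonal ray $\{(k,-k):k\geq 0\}$ (or an equivalent canonical line), up to an additional factor of at most $8n$ loss. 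This yields a large weighted count of SAWs from $(0,0)$ to a corner $(k_n,-k_n)$, and by translating the origin to the upper-left corner, these are SAWs in an appropriate square from one corner to the diagonally opposite one, i.e.\ they contribute to $B_{k_n}(x)$.

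The technical point I expect to be the main obstacle is guaranteeing that the endpoint provided by pigeonhole lies exactly on the diagonal, as opposed to any lattice point. Two ways I would try to push through this: either (i) an ``unfolding'' argument \`a la Hammersley--Welsh, replacing SAWs by bridges that end on a single line, then applying a second pigeonhole inside that line together with the symmetry $(x,y)\mapsto (y,x)$ to extract corner-to-corner walks in a square; or (ii) a surgery argument where any SAW from $(0,0)$ to an arbitrary point $u$ with $\|u\|\leq n$ and fitting in $[0,n]^2$ (after translation) is closed into a SAW from $(0,M)$ to $(M,0)$ of a slightly larger box $[0,M]^2$ by prepending/appending two short connecting paths of length $O(1)$ running along the boundary of the enlarged box; the cost $x^{O(1)}$ is a multiplicative constant, and the resulting exponential-in-$n$ gain swamps the polynomial loss from pigeonholing. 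Either way the conclusion is $B_{M_n}(x)\to\infty$ along a suitable subsequence, which is exactly $\limsup_m B_m(x)=\infty$ and hence, via $Z_m(x)\geq x^4 B_m(x)^4$, the proposition.
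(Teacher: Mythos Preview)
Your overall architecture is exactly the paper's: split $[0,2m+1]^2$ into four $m\times m$ squares, glue four corner-to-corner walks with the four cardinal edges, and reduce to $\limsup_m B_m(x)=\infty$. The gap is in how you produce corner-to-corner walks \emph{confined to the square}.

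The pigeonhole-plus-symmetry step does not do this. First, the dihedral group has eight elements and sends a point $u_n^\star$ to at most eight images; it cannot force a generic $u_n^\star$ onto the diagonal, so the ``factor $8n$'' is unjustified. Second, and more seriously, even if $u_n^\star=(k,-k)$, an unrestricted SAW from $0$ to $(k,-k)$ is not contained in $[0,k]\times[-k,0]$, so it does not contribute to $B_k(x)$ at all. Your option (ii) does not repair this: the connecting paths have length $O(n)$, not $O(1)$ (the endpoints can sit anywhere in the bounding box), and for $x\in(1/\mu,1)$ the factor $x^{O(n)}$ can kill the $(\mu x)^n$ gain; self-avoidance of the surgery is also not clear since the walk may touch the boundary of its bounding box.

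Your option (i) is the right direction and is essentially what the paper does, but it needs one more ingredient. After one Hammersley--Welsh unfolding you get bridges, which live only in a horizontal \emph{strip}; the reflection $(x,y)\mapsto(y,x)$ gives walks in a vertical strip, and two strips overlap, so the concatenation need not be self-avoiding. The paper's fix (Lemma~\ref{lemma:number of rectangular SAWs}, Step~1) is to perform a \emph{second} unfolding in the $x$-direction on each bridge, producing walks confined to a genuine rectangle $[0,k]\times[0,l]$ at the cost of another $e^{-c\sqrt n}$ via Theorem~\ref{IntParts}. After pigeonholing on $(k,l)$, two such rectangle walks (one reflected across the diagonal) concatenate into a corner-to-corner walk in $[0,k+l]^2$; this yields $a_{2n}\ge \mu^{2n}e^{-c\sqrt n}/n^4$, and then choosing $m$ to maximise the count for each $n$ gives $Z_m(x)\ge \big(x^{n+1}\mu^n e^{-c\sqrt n}/n\big)^4\to\infty$.
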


It is classical that the number of self-avoiding walks with certain
constraints grows at the same exponential rate as the number of
self-avoiding walks without constraints (we will show it in our
context in the proof of Lemma~\ref{lemma:number of rectangular SAWs} below). For instance, let $x(v)$ and $y(v)$ be the first and the second coordinates of the vertex $v$.
The number $b_n$ of {\em self-avoiding bridges} of length $n$, meaning self-avoiding walks $\gamma$ of length $n$ such that $y(\g_0)=\min_{t\in[0,n]} y(\g_t)$ and $y(\g_n)=\max_{t\in[0,n]} y(\g_t)$, satisfies
\begin{equation}\label{eq:42}e^{-c\sqrt n}\mu^n\le b_n\le \mu^n\end{equation}
for every $n$ \cite{HammersleyWelsh} (see also \cite{MadrasSlade} for a modern exposition). This result harnesses the following theorem on integer partitions
which dates back to 1917.

\begin{theorem}[Hardy \& Ramanujan \cite{HR17}]
\label{IntParts}
For an integer $A\geq 1$, let $P_D(A)$ denote the number of ways of
writing $A=A_1+\dotsb+A_k$ with $A_1>\dotsb>A_k\geq 1$, for any $k\geq 1$.
Then
\begin{equation*}
\label{IntPartAsymp}
\log P_D(A) \sim \pi \sqrt{ \frac{A}{3}}
\end{equation*}
as $A\to\infty$.
\end{theorem}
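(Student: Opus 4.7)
The plan is to analyze the generating function
$$D(q) := \sum_{A \geq 0} P_D(A)\, q^A \;=\; \prod_{n \geq 1}\bigl(1 + q^n\bigr)$$
near $q=1$ and then extract $\log P_D(A)$ by a saddle-point argument. Writing $q = e^{-s}$ with $s>0$, I would first compare the sum $\sum_{n \geq 1}\log(1+e^{-ns})$ to its Riemann integral (the integrand is bounded, smooth and exponentially decaying, so the error is $O(1)$) and use the classical evaluation $\int_0^\infty \log(1+e^{-t})\,dt = \sum_{k\geq 1}(-1)^{k+1}/k^2 = \pi^2/12$ to obtain
$$\log D(e^{-s}) \;=\; \frac{\pi^2}{12\, s} + O(1) \qquad (s \to 0^+).$$

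The upper bound then follows from the elementary Chernoff-type inequality $P_D(A) \leq e^{sA}\, D(e^{-s})$, valid for every $s>0$. Minimizing the right-hand side in $s$ gives $s_A := \pi/(2\sqrt{3A})$, at which both $sA$ and $\pi^2/(12s)$ equal $\tfrac{1}{2}\pi\sqrt{A/3}$, so $\log P_D(A) \leq \pi\sqrt{A/3} + O(1)$.

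For the matching lower bound I would introduce the Gibbs measure $\mu_s(\lambda) := q^{|\lambda|}/D(q)$ on partitions into distinct parts. Under $\mu_s$, the indicators $\xi_n := \mathbf{1}\{n \in \lambda\}$ are independent Bernoullis with parameter $e^{-ns}/(1+e^{-ns})$, so $|\lambda| = \sum_n n \xi_n$ has mean $\pi^2/(12 s^2) + O(1/s)$ and variance of order $s^{-3}$. Choose $s$ slightly larger than $s_A$ so that $\mathbb{E}_s|\lambda| = A - 2K s^{-3/2}$ for a constant $K$ larger than the Chebyshev constant (this perturbs $s_A$ multiplicatively only by $1 + O(A^{-1/4})$). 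Chebyshev then places at least half of the $\mu_s$-mass on integers $|\lambda|$ in a window of width $\leq 2Ks^{-3/2}$ lying entirely below $A$, and pigeonhole inside that window produces an integer $A' \leq A$ with $\mu_s(|\lambda| = A') \geq c s^{3/2}$. Unrolling the definition of $\mu_s$,
$$\log P_D(A') \;=\; sA' + \log D(e^{-s}) + \log \mu_s(|\lambda|=A') \;\geq\; \pi\sqrt{A/3} - O(\log A).$$
The elementary monotonicity $P_D(A+1) \geq P_D(A)$ (obtained by incrementing the largest part of any distinct-parts partition of $A$) gives $P_D(A) \geq P_D(A')$ and transfers this to $\log P_D(A) \geq \pi\sqrt{A/3}\,(1-o(1))$.

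The main obstacle is the lower bound, where one has to prevent the $\mu_s$-mass from concentrating on only a handful of values of $|\lambda|$; the Chebyshev-plus-pigeonhole step above just barely suffices at leading order. A local central limit theorem for $\sum_n n\xi_n$ would upgrade the $\gtrsim s^{3/2}$ bound to a matching $\sim s^{3/2}$ asymptotic and produce the full Hardy-Ramanujan expansion with its polynomial prefactor, but that refinement is not needed in the sequel.
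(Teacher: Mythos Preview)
The paper does not prove this theorem; it is quoted as a classical result with a citation to Hardy and Ramanujan, and only the crude upper bound $P_D(A)\le e^{c\sqrt A}$ is actually used (to bound the number of width sequences in Step~1 of Lemma~\ref{lemma:number of rectangular SAWs}). Your Chernoff upper bound alone already delivers everything the paper needs.

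Your full argument is the standard generating-function saddle-point approach and is correct in outline. One small overstatement: in the lower bound you claim an $O(\log A)$ error, but your own bookkeeping does not give this. Shifting $s$ away from $s_A$ by $O(s_A^{3/2})$ indeed costs only $O(1)$ in $sA+\pi^2/(12s)$ (second order at the saddle), but the term $s(A'-A)$ with $|A'-A|=O(s^{-3/2})$ contributes $O(s^{-1/2})=O(A^{1/4})$. This is still $o(\sqrt A)$, so the asymptotic $\log P_D(A)\sim\pi\sqrt{A/3}$ follows; just do not overstate the rate.
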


In the following, we need a class of walks with even more restrictive constraints. A {\em squared walk (of span} $k$) is a self-avoiding walk such that $\g_0=(0,0)$, $\g_n=(k,k)$ and $\g\subset[0,k]^2$.

\begin{lemma}
\label{lemma:number of rectangular SAWs}
For $c$ sufficiently large and $n$ even, the number $a_n$ of squared walks of length $n$ satisfies
$$ a_n \geq \mu^n e^{-c \sqrt n}.$$
\end{lemma}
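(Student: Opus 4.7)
My plan is to adapt the Hammersley--Welsh argument used to prove \eqref{eq:42} for bridges, applying its partition decomposition in both coordinate directions. The HW argument decomposes any SAW of length $n$ into $y$-bridges whose $y$-spans form a strictly decreasing sequence; Theorem~\ref{IntParts} bounds the number of such sequences by $P_D(n) \le e^{\pi\sqrt{2n/3}}$, which combined with $c_n \ge \mu^n$ gives $b_n \ge \mu^n e^{-c\sqrt n}$.

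The first step is to strengthen \eqref{eq:42} to \emph{doubly monotone walks}: walks from the origin that are bridges in both the $x$- and $y$-directions, and therefore live in their bounding box $[0,j]\times[0,k]$ with endpoint $(j,k)$. Applying the HW decomposition successively in the two directions (cutting each $y$-bridge at its successive $x$-records and arranging the pieces to be simultaneously monotone in both coordinates) introduces a second factor $P_D(n)$ and yields $c_n \le P_D(n)^2\, d_n$, hence $d_n \ge \mu^n e^{-c\sqrt n}$ where $d_n$ counts doubly monotone walks of length $n$.

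The second step is to reduce from doubly monotone walks to squared walks via a simple surgery. If a doubly monotone walk $\gamma$ of length $n$ ends at $(j,k)$ with $j \le k$, append the $k-j$ horizontal edges along $\{y=k\}$ from $(j,k)$ to $(k,k)$, producing a squared walk of span $k$ and length $n+(k-j)$ (symmetrically, with vertical edges, if $j>k$). These appended edges lie strictly outside the original bounding box $[0,j]\times[0,k]$, so self-avoidance is preserved, and the map is injective because the original walk is recovered by stripping the maximal straight boundary-aligned tail at the terminal corner. Summing over source walks therefore gives
\[
a_N \;\ge\; \sum_{n \le N}\; \#\{\gamma \text{ doubly monotone of length } n \text{ with } |k-j| = N-n\}.
\]

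The main obstacle is the length-offset coupling: the surgery changes the length by $|k-j|$, a priori of order $n$, so one must show that the slice $|k-j|=N-n$ of the joint distribution is not too small. I plan to handle this by applying Theorem~\ref{IntParts} a third time to the endpoint offset itself: in the double HW decomposition the offset $|k-j|$ equals the difference between the sums of $y$-spans and $x$-spans of the pieces, so it is a determined function of the underlying partition data. Pigeonholing in the $(n,|k-j|)$ plane, using that both span-sequences range over $\le P_D(n)$ values, produces for each even $N$ a collection of doubly monotone walks with $n+|k-j|=N$ of cardinality $\ge \mu^N e^{-c\sqrt N}$, and the extension from the second step then yields the lemma. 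The technical heart of the argument is this triple use of Theorem~\ref{IntParts}.
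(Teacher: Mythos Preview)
Your Step~1 is essentially the paper's Step~1 (bridges $\to$ walks spanning a rectangle via a second Hammersley--Welsh unfolding), and the bound $d_n\ge \mu^n e^{-c\sqrt n}$ is correct. The gap is in Step~3. Appending $|k-j|$ straight edges turns a rectangle walk of length $n$ into a squared walk of length $N=n+|k-j|$, but the appended segment carries no entropy: you are producing at most $d_n\le c_n\approx \mu^n$ squared walks of length $N$, whereas you need $\mu^N e^{-c\sqrt N}$. This is only good enough if $|k-j|=O(\sqrt n)$ for a positive fraction of rectangle walks, and nothing in your argument establishes that. Knowing that $(j,k)$ is a function of the two partition sequences does not help: the partitions have at most $P_D(n)^2$ \emph{types}, but $j=\sum w_i$ and $k=\sum h_i$ can still differ by order $n$, and pigeonholing over the $(n,|k-j|)$ plane only produces \emph{some} good target length $N$, not every even $N$. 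In the (a priori possible) scenario where most rectangle walks of length $n$ have $|k-j|\sim \alpha n$ for some $\alpha>0$, your construction yields $a_{(1+\alpha)n}\gtrsim \mu^n$, which misses the claimed bound by an exponential factor.

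The paper sidesteps the offset problem entirely by a different passage from rectangles to squares: pigeonhole on the endpoint $(k,l)$ to find at least $d_n/n^2$ rectangle walks of length $n$ all ending at the \emph{same} $(k,l)$, then take any ordered pair of such walks, reflect one across the diagonal $e^{i\pi/4}\mathbb R$, and concatenate. This yields a squared walk of span $k+l$ and length \emph{exactly} $2n$, so $a_{2n}\ge (d_n/n^2)^2\ge \mu^{2n}e^{-c\sqrt n}/n^4$, with no need to control $|k-l|$. (A minor separate point: your injectivity claim in Step~2 is not quite right, since the original rectangle walk may itself end with steps along $y=k$ and be over-stripped; but this only costs a polynomial multiplicity and is not the real obstruction.)
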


\begin{figure}
\begin{center}
\includegraphics[width=0.50\textwidth]{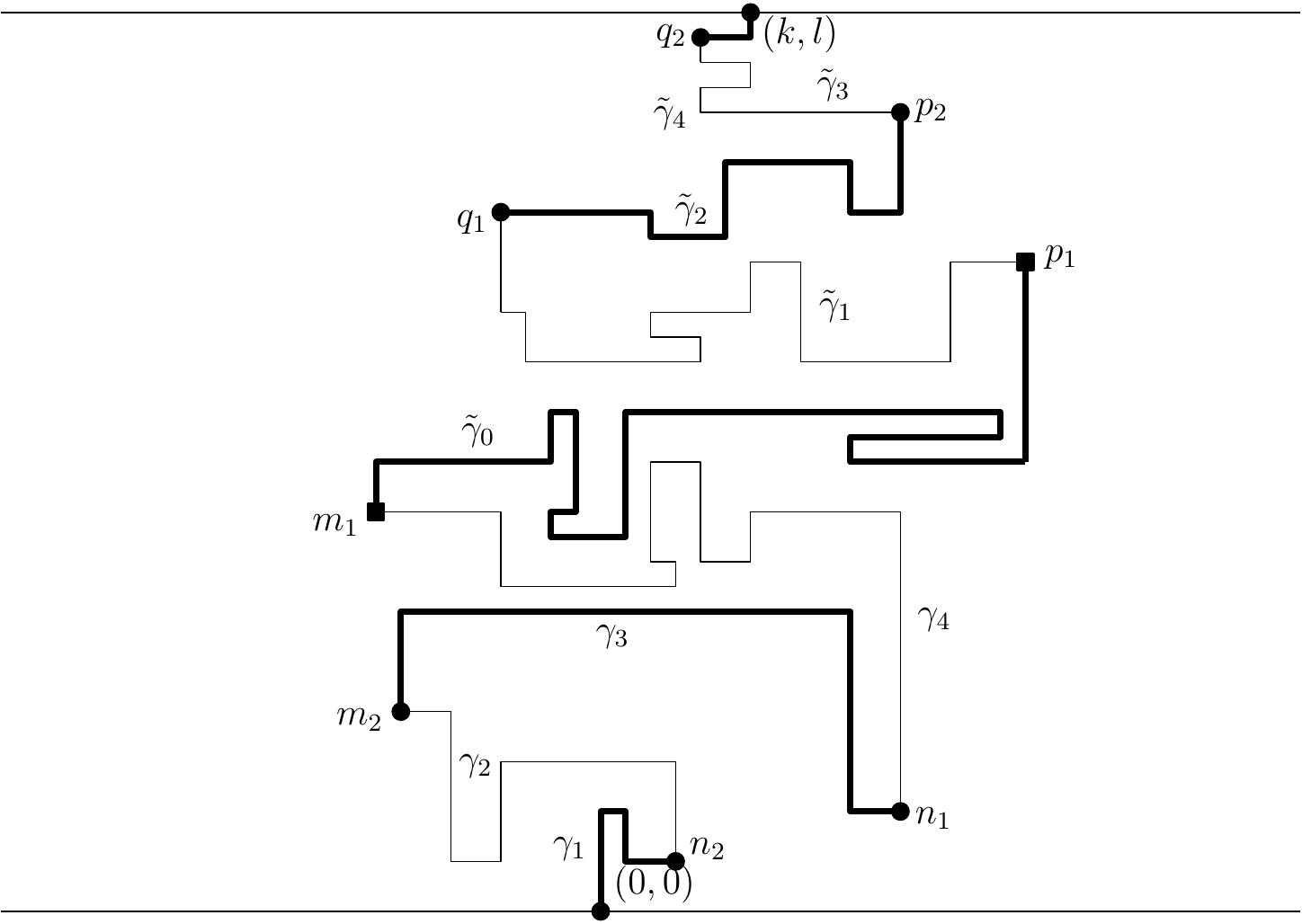}
\end{center}
\caption{\label{fig:bridge_rectangle}The decomposition of a bridge into walks. One can construct a squared walk in a rectangle by reflecting non-bold walks and then concatenating all the walks together.}
\end{figure}

\paragraph{Proof step 1: Rectangles.}
Let $\Lambda_n$ be the set of self-avoiding bridges of length $n$ starting at the origin. Let $\Sigma_n$ be the set of $n$-step self-avoiding walks for which there exists $(k,l)$ such that $\g_0=(0,0)$, $\g_n=(k,l)$ and $\g\subset[0,k]\times[0,l]$. We construct a map from $\Lambda_n$ to $\Sigma_n$. 

Fix $\g\in \Lambda_n$ and denote by $m_1$ the first time at which
$x(\g_{m_1})=\min_{t\in[0,n]} x(\g_t)$, see figure \ref{fig:bridge_rectangle}. Then, define $n_1$ to be the first time at which $x(\g_{n_1})=\max_{t\in[0,m_1]}x(\g_t)$. One can then define recursively $m_k$, $n_k$, by the formul\ae
\begin{align*}m_k&=~\min\{r\leq n_{k-1}~:~x(\g_{r})=\min_{t\in[0,n_{k-1}]}x(\g_t)\}\\
n_k&=~\min\{r\leq m_k~:~x(\g_{r})=\max_{t\in[0,m_k]}x(\g_t)\}\end{align*}
We stop the recursion the first time $m_k$ or $n_k$ equals 0. For
convenience, if the first time is $n_k$, we add a further step
$m_{k+1}=0$. We are then in possession of a sequence of integers
$m_1>n_1>m_2>\dots>m_r\ge n_r\ge0$ and a sequence of walks
$\g_{2r-1}=\g[n_1,m_1]$, $\g_{2r-2}=\g[m_2,n_1],\dots,$
$\g_1=\g[0,m_{r}]$. Note that the width of the walks $\gamma_i$ is
strictly increasing (see figure \ref{fig:bridge_rectangle} again).

Similarly, let $p_1$ be the last time at which $x(\g_{p_1})=\max_{t\in[m_1,n]} x(\g_t)$ and $q_1$ the last time at which $x(\g_{q_1})=\min_{t\in[p_1,n]}x(\g_t)$. Then define recursively $p_k$ and $q_k$ by the following formula
\begin{align*}
p_k&=~\max\{r\geq q_{k-1}~:~x(\g_{r})=\max_{t\in[q_{k-1},n]}x(\g_t)\}\\
q_k&=~\max\{r\leq p_k~:~x(\g_{r})=\min_{t\in[p_k,n]}x(\g_t)\}
\end{align*}
This procedure stops eventually and we obtain another sequence of
walks $\tilde\g_0=\g[m_1,p_1]$, $\tilde\g_1=\g[p_1,q_1]$, etc. This
time, the width of the walks is strictly decreasing, see figure \ref{fig:bridge_rectangle} one more time.

For a walk $\omega$, we set $\sigma(\omega)$ to be its reflexion with respect to the vertical line passing through its starting point. Let $f(\g)$ be the concatenation of $\g_1$, $\sigma(\g_2)$, $\g_3,\dots,\sigma(\g_r)$, $\tilde\g_0$, $\sigma(\tilde\g_1)$, $\tilde\g_2$ and so on. This walk is contained in the rectangle with corners being its endpoints so that $f$ maps $\Lambda_n$ on $\Sigma_n$.

In order to estimate the cardinality of $\Sigma_n$, we remark that each element of $\Sigma_n$ has a limited number of possible preimages under $f$. More precisely, the map which gives $f(\g)$ and the widths of the walks $(\g_i)$ and $(\tilde \g_i)$ is one-to-one (the reverse procedure is easy to identify). The number of possible widths for $\g_i$ and $\tilde \g_i$ is the number of pairs of {\em decreasing} sequences partitioning an integer $l\leq n$. This number is bounded by $e^{c\sqrt n}$ (Theorem~\ref{IntParts}). Therefore, the number of possible preimages under $f$ is bounded by $e^{c\sqrt n}$.
Using \eqref{eq:42}, the cardinality of $\Sigma_n$ is thus larger than $e^{-c\sqrt n}b_n\geq e^{-2c\sqrt n}\mu^n$.

So far $n$ was not restricted to be even.

\begin{figure}[h]
\begin{center}
\includegraphics[width=0.50\textwidth]{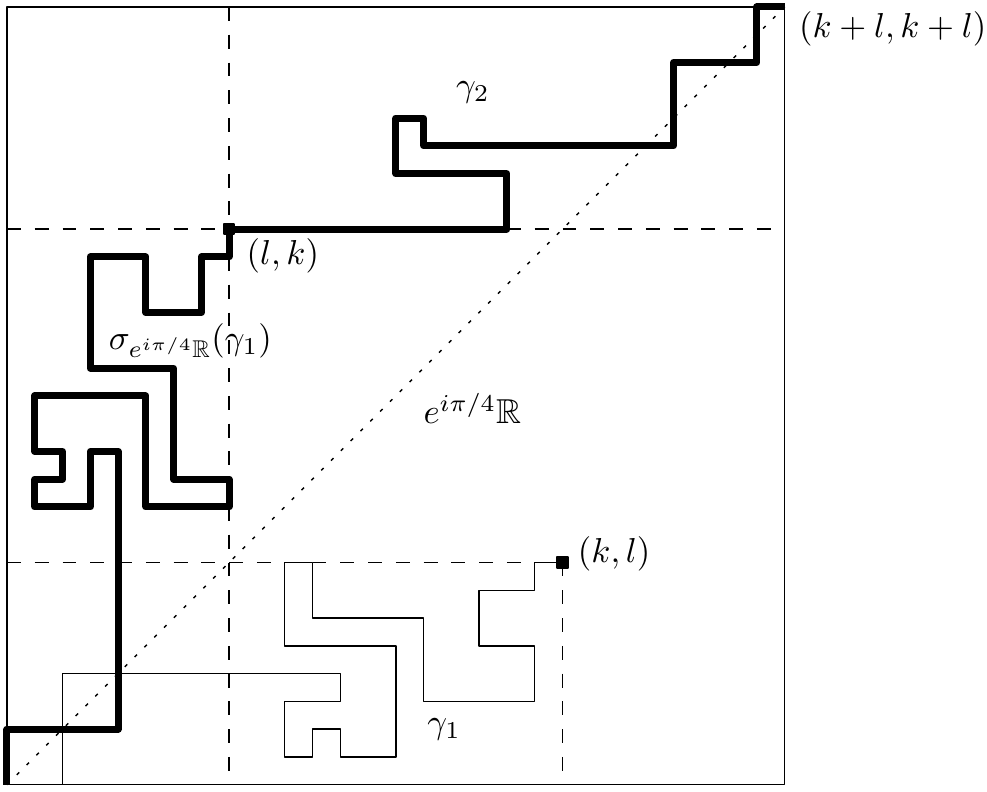}
\end{center}
\caption{\label{fig:rectangle_square}This figure depicts the passage of two walks in the rectangle $[0,k]\times[0,l]$ to a walk in the square $[0,k+l]^2$.}
\end{figure}

\paragraph{Step 2: Squares.} We have bounded from below the number of
$n$-step self-avoiding walks `contained in a rectangle'. We now
extend this bound to the case of squares. There exist $k,l\leq n$ such
that the number of elements of $\Sigma_n$ with $(k,l)$ as an ending
point is larger than $e^{-2c\sqrt n}\mu^n/n^2$. By taking two
arbitrary walks of $\Sigma_n$ ending at $(k,l)$, one can construct a
$2n$-step self-avoiding walk with $\g_0=(0,0)$ and $\g_{2n}=(k+l,k+l)$
contained in $[0,k+l]^2$ by reflecting orthogonally to
$e^{i\pi/4}\mathbb R$ the first walk, and then concatenating the two, see
figure \ref{fig:rectangle_square}. We deduce that $a_{2n}\geq
\mu^{2n}e^{-4c\sqrt n}/n^4$. This shows the lemma for $n$ sufficiently
large, and one can increase $c$ if necessary to handle all even $n$.\qed

\begin{proof}[Proposition~\ref{proposition:loop}. ]
Squared walks with length $n$ were defined as walks between corners of
some $m\times m$ square, but $m$ was not fixed. Fix now $m$ to be such
that the number of such walks is maximized (and then it is at least $ a_n/n$
where $a_n$ is the total number of squared walks). It is interesting
to remark that finding the maximal $m$ as an explicit function of $n$,
even asymptotically, seems difficult, probably no easier than the
$\SLE_{8/3}$ conjecture. But we do not need to know its value.
From any quadruplet $(\g_1,\g_2,\g_3,\g_4)$ of such squared self-avoiding walks, one can construct a self-avoiding polygon of $P_{m}$ as follows (see figure \ref{fig:construction_loops}):
\begin{itemize}
\item translate $\g_1$ and $\g_3$ by $(m+1,0)$ and $(0,m+1)$ respectively,
\item rotate $\g_2$ and $\g_4$ by an angle $\pi/2$, and then translate them by $(m,0)$ and $(2m+1,m+1)$ respectively,
\item add the four edges $[(m,0),(m+1,0)]$, $[(2m+1,m),(2m+1,m+1)]$, $[(m,2m+1),(m+1,2m+1)]$ and $[(0,m),(0,m+1)]$.
\end{itemize}

Since each walk is contained in a square, one can easily check that we obtain a $(4n+4)$-step polygon in $P_{m}$. Using Lemma~\ref{lemma:number of rectangular SAWs}, we obtain
$$Z_{m}(x)\geq x^{4n+4} \left(\frac{a_n}{n}\right)^4\geq \left(\frac{x^{n+1}\mu^{n}e^{-c\sqrt n}}{n}\right)^4.$$
When $n$ goes to infinity, the right-hand side goes to infinity and the claim follows readily.\end{proof}

\section{Proof of the main results}

The strategy is the following. We first show that for some hole (namely it will be a connected  union of boxes of some size $m$), the probability that the self-avoiding walk gets close to it without intersecting it can be estimated in terms of $Z_m(x)$. This claim is the core of the argument, and is presented in Proposition~\ref{proposition:boxes}. Next, we show that choosing $m$ large enough (or equivalently $Z_m(x)$ large enough), the probability to avoid some connected union of $k$ boxes decays exponentially fast in $k$, thus implying Theorem~\ref{space filling improved}.

Let $m>0$. A {\em cardinal edge} of a (square) box $B$ of side length
$2m+1$ is an edge of the lattice in the middle of one of the sides of
$B$. For $m\in\mathbb N$, two boxes $B$ and $B'$ of side length $2m+1$
are said to be {\em adjacent} if they are disjoint and each has a
cardinal edge, $[xy]$ and $[zt]$ respectively, such that $x\sim z$,
$y\sim t$ (see figure~\ref{fig:FD_delta}). A family $F$ of boxes is
called {\em connected} if every two boxes can be connected by a path
of adjacent boxes in $F$. 

To simplify the picture, we will assume that
all our boxes have their lower left corner in $(2m+2)\d \Z^2$. When $\Omega_\delta$ is fixed, such boxes included in $\Omega_\delta$ are called $m$-{\em boxes} and the set of $m$-boxes is denoted by $\mathcal F(\Omega_\d,m)$.
\begin{figure}[t]
\begin{center}
\includegraphics[width=1.00\textwidth]{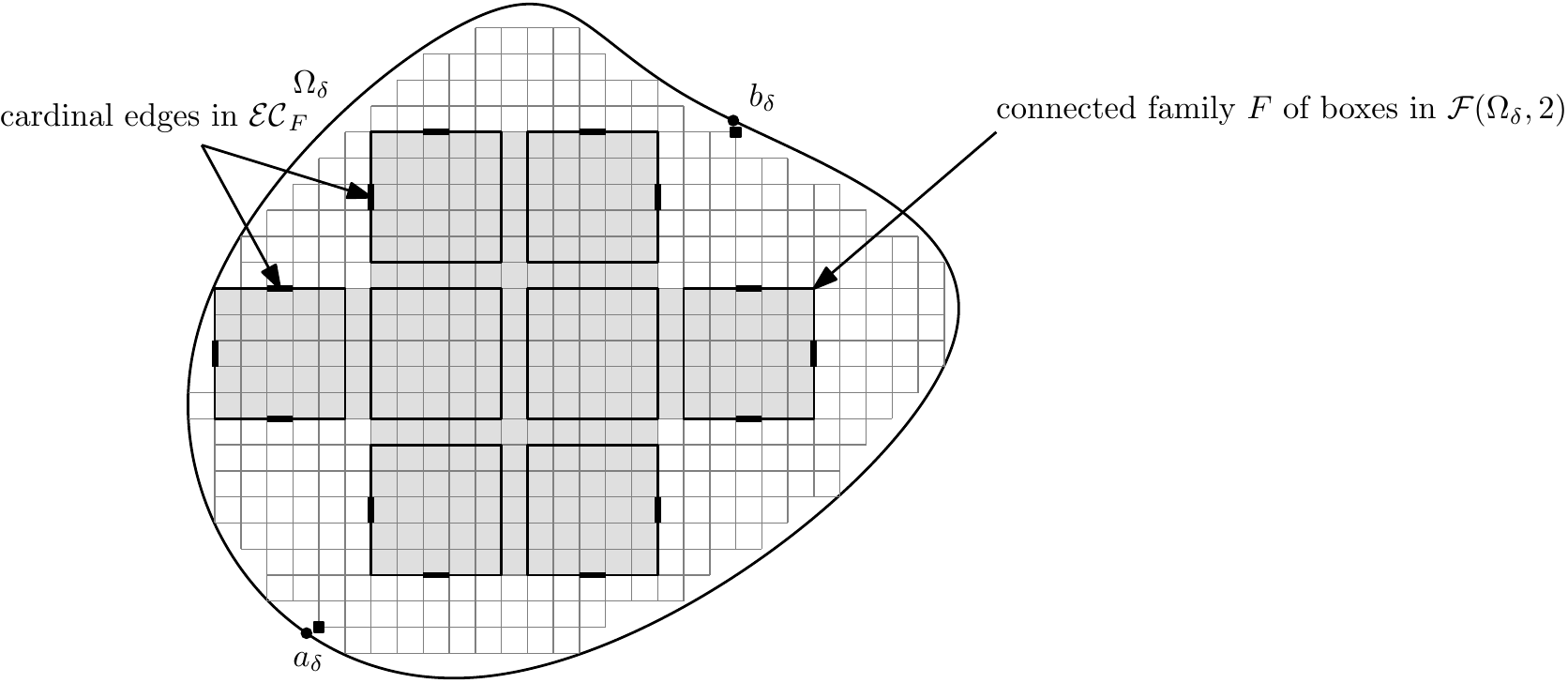}
\end{center}
\caption{\label{fig:FD_delta}A discrete domain with a connected
  component of adjacent boxes of size $5$ ($m=2$). Edges of $\mathcal E_F$ lie in the gray area.}
\end{figure}

Let us return to the issue of domain regularity discussed after
Theorem \ref{space filling improved}. With the definitions above we
can now explain that, in fact, our only requirement from the domain is
that the family of all boxes in $\mathcal F(\Omega_\d,m)$ is connected. Let us state
this formally. The set $\Gamma_\d^\xi$ is as
in Theorem \ref{space filling improved}.

\begin{theorem}\label{thm:connected} For every $x>1/\mu$, there exists
  $m=m(x)$ and $c(x)>0$ such that for every domain $\O$ and every $\d>0$ such that
$\mathcal F(\Omega_\d,m)$ is connected, one has, for every $a$
  and $b$ in the boundary of $\O$, and every $\lambda>0$,
\[
\P_{\O_\d,a_\d,b_\d,x}(\exists\mbox{ a component of
}\O_\d\setminus\Gamma_\d^{6m}\mbox{ of size }>\lambda )\le
\frac{C(x,\O)}{\d^2}e^{-c(x)\lambda}.
\]
\end{theorem}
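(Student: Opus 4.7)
The plan is to combine Proposition~\ref{proposition:loop} with the (forthcoming) Proposition~\ref{proposition:boxes} via a lattice-animal union bound over connected families of $m$-boxes. First I would show that if a component of $\Omega_\delta\setminus\Gamma_\delta^{6m}$ has cardinality greater than $\lambda$, then one can associate to it a connected family $F\in\mathcal{F}(\Omega_\delta,m)$ of $m$-boxes with $|F|\geq c_0\lambda/m^2$ for an absolute constant $c_0$. The cleanest route is to place disjoint $m$-boxes aligned to the $(2m+2)\delta\mathbb{Z}^2$-grid inside the component (or a slight enlargement where $\gamma_\delta$ still stays out) and check, Peierls-style, that enough of them are mutually adjacent in the sense of Section~3.

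\emph{Invoking the boxes proposition.} I then apply Proposition~\ref{proposition:boxes} in the form
$$
\P_{\Omega_\delta,a_\delta,b_\delta,x}\bigl[\gamma_\delta \text{ avoids a neighborhood of each box in } F\bigr]\;\leq\;\bigl(K/Z_m(x)\bigr)^{|F|},
$$
valid for every connected family $F$, with $K$ absolute. The factor $Z_m(x)^{|F|}$ reflects one polygon insertion from $P_m$ per box, while $K^{|F|}$ absorbs the choice of insertion window, reconnection overhead, and additional length contributed by the insertions. Using Proposition~\ref{proposition:loop}, I fix $m=m(x)$ so large that $Z_m(x)>2K\alpha$, where $\alpha$ is an absolute constant bounding the number of connected families of size $k$ rooted at a given $m$-box by $\alpha^k$ (such $\alpha$ exists since the box-adjacency graph has uniformly bounded degree).

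\emph{Union bound.} Writing $k_0=c_0\lambda/m^2$ and noting that the number of possible root boxes in $\mathcal{F}(\Omega_\delta,m)$ is at most $C|\Omega|/(m\delta)^2$,
\begin{align*}
\P[\exists\text{ component of size}>\lambda]
&\leq \sum_{\substack{F\text{ connected}\\ |F|\geq k_0}}\bigl(K/Z_m(x)\bigr)^{|F|}\\
&\leq \frac{C|\Omega|}{(m\delta)^2}\sum_{k\geq k_0}\alpha^k\bigl(K/Z_m(x)\bigr)^k \;\leq\; \frac{C(x,\Omega)}{\delta^2}\,e^{-c(x)\lambda},
\end{align*}
with $c(x)=(c_0\log 2)/m(x)^2$, as claimed.

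\emph{Main obstacle.} The substantive work lies in Proposition~\ref{proposition:boxes}, which I treat here as a black box. Its proof must build an injective map from pairs (walk $\gamma_\delta$ avoiding $F$; a tuple consisting of one polygon per box in $F$) to self-avoiding walks in $(\Omega_\delta,a_\delta,b_\delta)$ with controlled length increase, while preserving self-avoidance across insertions in \emph{adjacent} boxes — which is precisely why the adjacency definition reserves a corridor via the cardinal edges, serving as a canonical attachment channel between consecutive insertions. A secondary technicality is making the extraction of a connected family rigorous, particularly for thin components that do not obviously contain disjoint $m$-boxes; one resolves this either by thickening the component to absorb nearby boxes, or by rephrasing the insertion argument to operate directly on the component's boundary rather than on an inscribed family.
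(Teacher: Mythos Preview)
Your outline matches the paper's proof: choose $m$ via Proposition~\ref{proposition:loop}, apply Proposition~\ref{proposition:boxes}, and take a lattice-animal union bound over connected families of $m$-boxes. Two points, however, are handled more cleanly in the paper and deserve attention.

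\medskip
\emph{Extraction of the box family.} You propose placing $m$-boxes \emph{inside} the component $S$, and then worry (rightly) about thin components. The paper instead takes the family of $m$-boxes \emph{intersecting} $S$: since $S$ is at graph distance $>6m$ from $\gamma_\delta$ and each box has diameter $<6m$, any such box is automatically disjoint from $\gamma_\delta$. This gives $|F|\geq |S|/(2m+1)^2$ immediately and the thin-component issue evaporates; no thickening or boundary argument is needed.

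\medskip
\emph{The closeness condition.} Your statement of Proposition~\ref{proposition:boxes} (``$\gamma_\delta$ avoids a neighbourhood of each box in $F$'') is not quite right as a standalone inequality: if $F$ is a single box far from $\gamma_\delta$, the avoidance probability is close to~$1$, not $\le C/Z_m(x)$. The actual proposition bounds $\P(\bdist(\gamma_\delta,\mathcal V_F)=1)$, i.e.\ $\gamma_\delta$ misses $F$ but enters a box \emph{adjacent} to $F$; this adjacency is what allows the short link polygon $\ell$ to connect $\gamma_\delta$ to the inserted polygon. The paper secures this by taking $F$ \emph{maximal} among connected families covering $S$ and missing $\gamma_\delta$: maximality together with the hypothesis that $\mathcal F(\Omega_\delta,m)$ is connected forces some box adjacent to $F$ to meet $\gamma_\delta$. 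You should make this step explicit.

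\medskip
A minor remark on your description of the black box: the paper does not insert one $P_m$-polygon per box directly. It first glues them (inductively, via the cardinal-edge corridors you mention) into a \emph{single} self-avoiding polygon in $S_F$ visiting all external cardinal edges of $F$, then attaches this one polygon to $\gamma_\delta$ by a single link $\ell$. This is why the bound is $C(x,m)\,Z_m(x)^{-|F|}$ with no $K^{|F|}$ overhead.
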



\begin{proof}[Theorem \ref{space filling improved} given
    Theorem \ref{thm:connected}. ]
Here our domain is $\DD$. Clearly, the family of all boxes in $\DD_\d$
is connected (it is an interval in every row and every column), hence
Theorem \ref{thm:connected} applies. Taking $\lambda=C_1\log (\nicefrac
1\d)$ for $C_1$ sufficiently large gives the result.
\end{proof}
\begin{proof}[Theorem \ref{space filling expand} given Theorem
    \ref{thm:connected}. ]
Again, all we have to show is that the family of boxes in
$(\O+B(\xi\d))_\d$ is connected for $\xi$ sufficiently large. Taking
$\xi=6m$ we see that every box in $\O+B(6m\d)$ can be connected to a
box in $\O$, and any two boxes in $\O$ can be connected by taking a
path $\gamma$ in $\O$ between them (here is where we use that $\O$ is
connected) and checking that $\gamma+B(6m\d)$ contains a path of
connected boxes.
\end{proof}

Hence we need to prove Theorem~\ref{thm:connected}. Let $\d>0$. For $F\in \mathcal F(\Omega_\d,m)$, let $\mathcal V_F$ be the
set of vertices in boxes of $F$, and let $\mathcal E_F$ be the set of
edges with both end-points in $\mathcal V_F$. For two subsets $A$ and
$B$ of the vertices of $\O_\d$ define the {\em box distance}  $\bdist(A,B)$ between
them as the size of the smallest set of connected boxes containing one
box in $A$ and one box in $B$, minus 1. The boxes do not have to be
different, but then the distance is 0 --- if no such connected set
exists, then the distance is $\infty$. 

\begin{proposition}\label{proposition:boxes}
Let $(\Omega,a,b)$ be a domain with two points on the boundary. Fix
$\d>0$ and $m\in \N$ and assume $\mathcal F(\Omega_\d,m)$ is
connected. Then there exists $C(x,m)<\infty$ such that for every $F\in \mathcal F(\Omega_\d,m)$,
$$\P_{(\O_\d,a_\d,b_\d,x)}(\bdist(\g_\d,\mathcal V_F)= 1) \le C(x,m)\, Z_{m}(x)^{-|F|}.$$
\end{proposition}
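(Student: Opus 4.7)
The plan is an energy--entropy injection. Writing $\G_F := \{\gamma : \gamma \text{ is a SAW from } a_\d \text{ to } b_\d \text{ in } \O_\d \text{ with } \bdist(\gamma, \mathcal V_F) = 1\}$, I aim to build an injective (or bounded-to-one) map
\[
\Phi : \G_F \times \prod_{B_i \in F} P_m \longrightarrow \{\text{SAWs from } a_\d \text{ to } b_\d \text{ in } \O_\d\},
\qquad |\Phi(\gamma,(P_i))| = |\gamma| + \sum_i |P_i| + c,
\]
where $c$ depends only on $x$ and $m$, and \textbf{not} on $|F|$, $\d$, or $\O$. Once $\Phi$ is in place, summing $x^{|\Phi(\gamma,(P_i))|}$ over the domain and dividing by $Z_{(\O_\d, a_\d, b_\d)}(x)$ immediately yields the claimed bound with $C(x,m) := x^{-c}$ (times a bounded multiplicity if needed).

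The construction of $\Phi$ has two stages. First, using the connectedness of $F$, I fix a canonical spanning tree of its adjacency graph and merge the polygons $(P_i)$ into one self-avoiding super-polygon $\mathcal P \subset \mathcal V_F$: for each tree edge, i.e.\ an adjacency between boxes $B_i, B_j \in F$, apply the local surgery replacing the two parallel cardinal edges $[xy]\subset B_i$ and $[zt]\subset B_j$ (with $x\sim z$ and $y\sim t$) by the two perpendicular connector edges $[xz]$ and $[yt]$. Each merge exchanges two edges for two, so $|\mathcal P| = \sum_i |P_i|$, and $\mathcal P$ is a single self-avoiding polygon by induction on the tree. Crucially, $\bdist(\gamma,\mathcal V_F)\ge 1$ forces $\gamma\cap \mathcal V_F = \emptyset$, so $\mathcal P$ is automatically vertex-disjoint from $\gamma$. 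Second, I splice $\mathcal P$ into $\gamma$: the event supplies a box $B\notin F$ visited by $\gamma$ and adjacent to some $B^*\in F$. Picking the lex-smallest such pair $(B,B^*)$ and the first edge $e$ of $\gamma$ (in walk time) incident to $B$, I replace $e$ by a canonical detour of length $O(m)$ which routes through $B$ to a vertex $x$ next to the cardinal edge $[zt]\subset B^*$ facing $B$, enters $\mathcal P$ via the connector $x\to z$, traverses $\mathcal P$ the long way from $z$ to $t$, exits via $t\to y$, and routes back through $B$ to the other endpoint of $e$. The output $\gamma' = \Phi(\gamma,(P_i))$ is a SAW from $a_\d$ to $b_\d$ of length $|\gamma|+|\mathcal P|+O(m)$. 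For inversion, given $\gamma'$ and the known family $F$: the super-polygon is recovered as the subgraph of $\gamma'$ whose edges lie in $\mathcal V_F$, together with the $|F|-1$ canonical connector edges; undoing the spanning-tree surgeries returns the individual $(P_i)$; the splice is located as the unique place where $\gamma'$ enters and leaves $\mathcal V_F$; removing the detour recovers $\gamma$.

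The main obstacle is the splice step. The super-polygon $\mathcal P$ is automatically disjoint from $\gamma$, but the detour has to traverse $B$ itself, where $\gamma\cap B$ may obstruct the natural routes. Engineering a canonical, self-avoiding detour inside $B$ of length controlled by $m$ alone, and doing so in an invertible manner, is the delicate technical point. The key resources are the ample area $(2m+1)^2$ of $B$ compared to the fixed length of the detour, the rigidity of the cardinal-edge structure (which canonically pins down the entry and exit points of the detour into $\mathcal P$), and the canonical choice of the splice edge $e$. Edge cases --- for instance $a_\d$ or $b_\d$ lying inside or near $B$, or $\gamma$ touching several boxes adjacent to $F$ --- are handled at the cost of a bounded multiplicity factor which can be absorbed into the final constant $C(x,m)$.
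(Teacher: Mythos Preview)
Your first stage --- merging the per-box polygons $(P_i)$ into a single super-polygon $\mathcal P$ via spanning-tree surgeries --- is correct and is essentially the paper's Claim that $Z_F(x)\ge Z_m(x)^{|F|}$.

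The second stage has a genuine gap, and it is precisely the one you flag but do not resolve. You propose to delete a single edge $e$ of $\gamma$ in the box $B$ and replace it by a self-avoiding detour inside $B$ that reaches the vertices $x,y$ neighbouring the cardinal edge $[zt]$ of $B^*$. Any such detour must be vertex-disjoint from $\gamma\setminus\{e\}$. But $\gamma$ is completely unconstrained in $B$: for instance it may visit \emph{every} vertex of $B$ while not using the edge $[xy]$. In that situation, after deleting any edge $e$ of $\gamma$ the only vertices of $B$ not already occupied by $\gamma\setminus\{e\}$ are the two endpoints of $e$, so no detour to $x$ and $y$ exists unless $e=[xy]$ --- which by assumption is not in $\gamma$. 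The ``ample area $(2m+1)^2$'' of $B$ is thus not a resource here; the walk may simply fill it. Your map $\Phi$ is undefined on such configurations, and they cannot be dismissed as an edge case absorbable into $C(x,m)$.

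The paper's cure is to abandon explicit routing in favour of a symmetric-difference surgery. For each $\gamma_1\in\Theta_F$ it builds a short \emph{link polygon} $\ell(\gamma_1)$ of length at most $10m+10$ that contains the external cardinal edge $e$ of $F$, lies in $(\Omega_\d\setminus\mathcal E_F)\cup\{e\}$, and overlaps $\gamma_1$ along exactly one edge or two adjacent edges. The point is that $\ell$ only has to \emph{reach} $\gamma_1$ from $e$ inside $B$, not avoid it, and this is always possible since $\gamma_1$ visits $B$. The output walk is then the symmetric difference (as edge sets) of $\gamma_1$, $\ell(\gamma_1)$, and the super-polygon $\gamma_2\in S_F$; one checks it is connected with all degrees $0$ or $2$, hence a self-avoiding walk from $a_\d$ to $b_\d$. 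The bounded-to-one and length bookkeeping then proceed exactly as you outline.
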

\begin{proof}
For $F\in \mathcal F(\Omega_\d,m)$, let $\mathcal{EC}_F$ be the set of {\em
  external cardinal edges} of $F$ i.e.\ all cardinal edges in boxes of
$F$ which have neighbors outside of $F$. Let $S_F$ be the set of self-avoiding polygons included in $\mathcal E_F$ visiting all the edges in $\mathcal{EC}_F$. Let $Z_F(x)$ be the partition function of polygons in $S_F$. We have:

\begin{claim*}
For $F\in \mathcal F(\Omega_\d,m)$, $Z_F(x)\ge Z_m(x)^{|F|}$.
\end{claim*}
\begin{proof}[the claim.] We prove the result by induction on the cardinality of $F\in \mathcal F(\Omega_\d,\xi)$. If the cardinality of $F$ is 1, $Z_F(x)=Z_m(x)$ by definition. Consider $F_0\in \mathcal F(\Omega_\d,\xi)$  and assume the statement true for every $F\in \mathcal F(\Omega_\d,\xi)$ with $|F|<|F_0|$. There exists a box $B$ in $F_0$ such that $F_0\setminus \{B\}$ is still connected. Therefore, for every pair $(\g,\g')\in S_{\{B\}}\times S_{F_0\setminus\{B\}}$, one can associate a polygon in $S_F$ in a one-to-one fashion. Indeed, $B$ is adjacent to a box $B'\in F_0\setminus \{B\}$ so that one of the four cardinal edges (called $[ab]$) of $B$ is adjacent to a cardinal edge $[cd]$ of $B'$. Note that $[cd]$ belongs to $\mathcal {EC}_{F_0\setminus \{B\}}$. Then, by changing the edges $[cd]$ and $[ab]$ of $\g$ and $\g'$ into the edges $[ac]$ and $[bd]$, one obtains a polygon in $S_F$. Furthermore, the construction is one-to-one and we deduce
\begin{equation*}
Z_{F_0}(x)\ge Z_{F_0\setminus\{B\}}(x)Z_B(x) \ge 
Z_m^{|F_0\setminus\{B\}|}Z_m(x) = Z_m(x)^{|F_0|}.\tag*{\qedhere}
\end{equation*}
\end{proof}
Consider the set $\Theta_F$ of walks not intersecting $F$ yet reaching
to a neighboring box. Let $e$ be a cardinal edge of a box in $F$ which neighbours a box visited by $\gamma$.
For each $\g\in \Theta_F$, consider a self-avoiding polygon
$\ell=\ell(\gamma)$ ($\ell$ standing for ``link'') satisfying the following three properties:
\begin{itemize}
\item it contains $e$ and is included in $(\Omega_\d\setminus \mathcal E_F)\cup\{e\}$,
\item it intersects $\g$ either at just one edge, or at two adjacent edges only (to intersect means to intersect along edges),
\item it has length smaller than $10m+10$ (for simplicity, we will bound the length by $100m$).
\end{itemize}
One can easily check that such a polygon always exists, see figure \ref{fig:polygexist}.

\begin{figure}[t]
\begin{center}
\includegraphics[width=.50\textwidth]{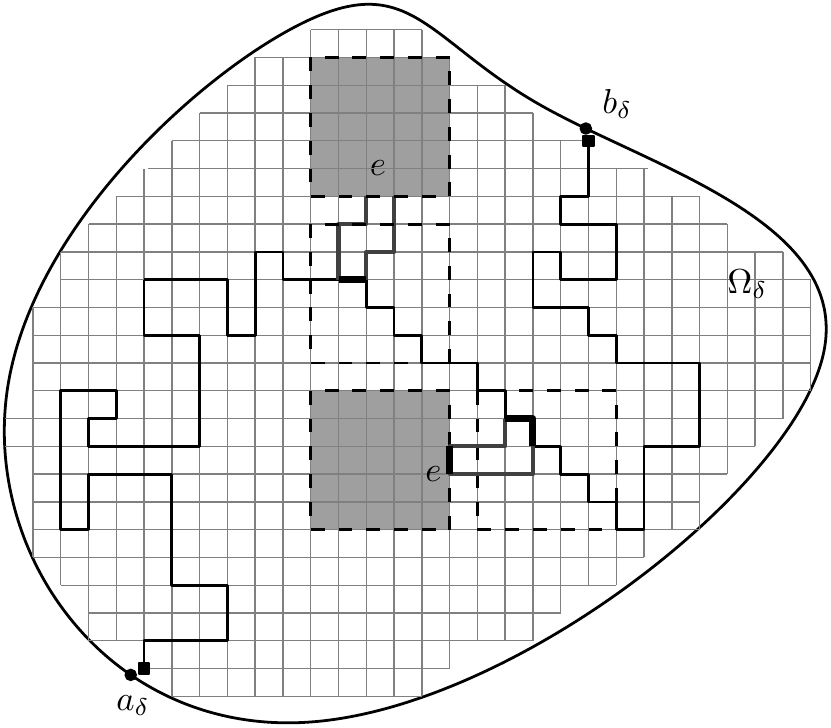}
\end{center}
\caption{\label{fig:polygexist} Two examples of the polygon $\ell$
  (the small N- and L-shaped loops). It overlaps the curve in one edge exactly, except in the second case, where we have no choice but overlapping the walk on two edges.}
\end{figure}

Now, consider the map $f$ that associates to
$(\g_1,\g_2)\in\Theta_F\times S_F$ the symmetric difference $\gamma=f(\g_1,\g_2)$ of
$\gamma_1$, $\ell(\gamma_1)$ and $\gamma_2$ (symmetric difference
here meaning as sets of edges). Note that the object
that we obtain is a walk from $a_\delta$ to $b_\delta$ in $\Omega_\delta$, which can be verified to be self-avoiding by
noting that each vertex has degree 0 or 2 and that the set is
connected. Further, its length is equal to $|\g_1|+|\ell(\g_1)|+|\g_2|-4$ or $|\g_1|+|\ell(\g_1)|+|\g_2|-6$ (this being due to the
fact that $\gamma_1$ and $\ell(\g_1)$ intersect at one or two
adjacent edges --- each intersection reduces the length by 2
edges). 
Now, given a path $\g$ there is a limited number of ways it may be
written as $f(\g_1,\g_2)$. Indeed, $e$ can be located by the only two
paths that exit $F$, and that gives $\g_2$. Given $e$, $\ell$ has only
a limited number of possibilities, say $4^{100m}$, and once one knows $\ell$
this gives $\g_1$. We can now write
\begin{align*}Z_{\Theta_F}(x)\cdot Z_F(x)&=\left(\sum_{\gamma_1\in \Theta_F}x^{|\gamma_1|}\right)\left(\sum_{\gamma_2\in S_F}x^{|\gamma_2|}\right)\\
&\le \max(1,x^{-100m})\sum_{\gamma_1\in \Theta_F,\g_2\in S_F}x^{|\gamma_1|+ |\ell(\gamma_1)|+|\gamma_2|}\\
&\le \max(1,x^{-100m})\max(x^4,x^6)\sum_{\gamma_1\in \Theta_F,\g_2\in S_F}x^{|f(\gamma_1,\gamma_2)|}\\
&\le 4^{100m} \max(x^6,x^{-100m+4})\sum_{\gamma\in f(\Theta_F\times S_F)}x^{|\gamma|}\\
&\le 4^{100m} \max(x^6,x^{-100m+4})Z_{(\O_\d,a_\d,b_\d)}(x),\end{align*}
where in the first inequality we used the fact that $\ell(\g_1)$ has length smaller than $100 m$, in the second the fact that $|f(\g_1,\g_2)|$ equals $|\g_1|+|\ell(
\g_1)|+|\g_2|-4$ or $|\g_1|+|\ell( \g_1)|+|\g_2|-6$, and in the third
the fact that $f$ is at most $100m^2$-to-one. Using the claim, the previous inequality implies
\begin{align*}
\P_{(\O_\d,a_\d,b_\d,x)}(\bdist(\g_\d,\mathcal V_F)=1) &=
\frac{Z_{\Theta_F}(x)}{Z_{(\O_\d,a_\d,b_\d)}(x)}\le\frac{C(x,m)}{Z_F(x)}\le\frac{C(x,m)}{Z_m(x)^{|F|}}.\qedhere
\end{align*}
\end{proof}


\begin{proof}[Theorem~\ref{thm:connected} in dimension 2.] Let
$x>1/\mu$ and let $(\Omega,a,b)$ be a domain with two points on the
boundary. Let $A_n$ be the number
of connected subsets of $\Z^2$ containing 0. It is well known that
$\varlimsup \sqrt[n]{A_n}$ is finite (see e.g.\ Theorem 4.20 in
\cite{GrimmettPerco}). Let therefore $\lambda=\lambda(2)$ satisfy
$A_n\le\lambda^n$ for all $n$. We now apply
Proposition~\ref{proposition:loop} and get some $m=m(x,2)$ such that
$Z_m(x)>2\lambda$. 

Let $\d>0$ and consider the event $\mathcal A(s)$ that there exists a
connected set $S$ of cardinality $s$ at distance larger than $6m$ of
$\g_\d$. Every box intersecting $S$ must be disjoint from $\g_\d$, so
there must exist a connected family of at least $s/(2m+1)^2$ boxes of
size $2m+1$ covering $S$ and not intersecting $\g_\d$. We may assume this family is maximal among
families covering $S$ and not intersecting $\gamma_\delta$. Since the
family of boxes is maximal, and because of the condition of the
theorem that the family of all boxes is connected, the box-distance between the union of boxes and $\g_\d$ is 1. Proposition~\ref{proposition:boxes} implies
$$\P_{(\O_\d,a_\d,b_\d,x)}[\mathcal A(s)]~\le~ \sum_{F\in\mathcal F(\Omega_\d,\xi):|F|
\geq s/(2m+1)^2}C(x,m)\left[Z_{m}(x)\right]^{-|F|}.$$
By the definition of $\lambda$, the number of families of connected
boxes of size $K$ in $\mathcal F(\Omega_\d,\xi)$ is bounded by
$(C(\Omega)/\delta^2) \lambda^{K}$ (since up to translation they are connected
subsets of a normalized square lattice), where $C(\Omega)=C(\Omega,x,m)$ depends on the area of $\Omega$, $x$ and $m$. Therefore, for $c>0$,
\begin{align*}\P_{(\O_\d,a_\d,b_\d,x)}\left[\mathcal A\left(s\right)\right]
&\le C(x,m)\frac{C(\Omega)}{\delta^2}\sum_{i\ge s /(2m+1)^2} \left(\frac{\lambda}{Z_{m}(x)}\right)^i\\
&\le \frac{C(x,m,\Omega)}{\delta^2}2^{-s/(2m+1)^2}\end{align*}
and the theorem follows.
\end{proof}

\begin{remark}\label{generalization}
Let us briefly describe what needs to be changed in higher
dimensions. The notion of cardinal edge must be extended: in the box
$[0,2m+1]^d$, cardinal edges for the face $[0,2m+1]^{d-1}\times\{0\}$
are all the edges joining vertices in $\{m,m+1\}^{d-1}\times\{0\}$
of the form $$\big[\,(\underbrace{m+1,\dots,m+1}_{i-1\text{ terms}},\underbrace{m,\dots,m}_{d-i\text{ terms}},0)\ ,\ (\underbrace{m+1,\dots,m+1}_{i\text{ terms}},\underbrace{m,\dots,m}_{d-i-1\text{ terms}},0)\,\big]$$
for $1\le i\le d-1$. See figure \ref{fig:cube} for an example in 3
dimensions. We only consider part of the edges joining vertices in $\{m,m+1\}^{d-1}\times\{0\}$ because all these edges should belong to a self-avoiding polygon. Similarly, cardinal edges can be defined for every face. It can be shown that the number of polygons included in some box $[0,2m+1]^d$ and visiting all the cardinal edges grows exponentially at the same rate as the number of self-avoiding walks. The proofs then apply {\em mutatis mutandis}. \end{remark}

\begin{figure}[t]
\begin{center}
\includegraphics{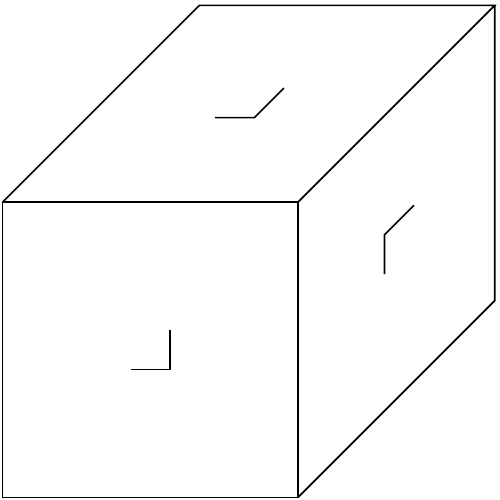}
\end{center}
\caption{\label{fig:cube}Cardinal edges in three dimensions.}
\end{figure}

\section{Questions}

The supercritical phase exhibits an interesting behavior. We know that the curve becomes space-filling, yet we have very little additional information. For instance, a natural question is to study the length of the curve. It is not difficult to show that the length is of order $1/\delta^2$, yet a sharper result would be interesting:
 \begin{problem}
 For $x>1/\mu$, show that there exists $\theta(x)>0$ such that for every $\varepsilon>0$ and every sufficiently regular domain $(\Omega,a,b)$,
 $$\mathbb P_{(\O_\d,a_\d,b_\d,x)}\big(\big|~|\g_\d|-\theta(x)\cdot|\Omega_\d|~\big|~>~\varepsilon~|\Omega_\d|\big)\longrightarrow 0\quad\text{when }\delta\rightarrow0.$$
 \end{problem}
The quantity $\theta(x)$ would thus be an averaged density of the walk. Note that the existence of $\theta(x)$ seems natural since the space-filling curve should look fairly similar in different portions of the space.

Another challenge is to try to say something nontrivial about the critical phase. Recently, the uniformly chosen self-avoiding walk on $\mathbb Z^d$ was proved to be sub-ballistic \cite{DH12}. A natural question would be to prove that it is {\em not} space-filling.
\begin{problem}
When $x=1/\mu$ and $(\O,a,b)$ is sufficiently regular, show that the sequence $(\g_\d)$ does not become space-filling.
\end{problem}

Finally, we recall the conjecture made in \cite{Sm2} concerning the
two-dimensional limit in the supercritical phase.

\begin{conjecture}[Smirnov]
Let $(\Omega,a,b)$  be a simply connected domain of $\mathbb C$ and consider approximations by the hexagonal lattice. The law of $(\g_\delta)$ converges to the chordal Schramm-L\"owner Evolution in $(\Omega,a,b)$\begin{itemize}
\item with parameter 8/3 if $x=1/\mu$,
\item with parameter 8 if $x>1/\mu$.
\end{itemize}
\end{conjecture}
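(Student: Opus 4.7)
The plan is to follow the discrete-holomorphic observable framework that has succeeded for related two-dimensional lattice models. First I would construct an observable $F_\delta$ on edges of the hexagonal approximation of $(\Omega,a,b)$ that is approximately discrete-holomorphic with prescribed boundary conditions (constant modulus on $\partial\Omega$ with a monodromy determined by the conformal spin $\sigma$). At $x=1/\mu$ with $\mu=\sqrt{2+\sqrt{2}}$, this is the parafermionic observable of Smirnov with spin $\sigma=5/8$, which is exactly discrete-holomorphic. In the supercritical case one would introduce an analogous observable within the loop $O(n)$ representation and accept a non-vanishing defect, which would have to be shown to vanish in the scaling limit using the space-filling estimates of Theorem~\ref{thm:connected} as the essential input.

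Next I would establish tightness of $(\gamma_\delta)$ in a topology adapted to (space-filling) curves. In the supercritical regime the main ingredient is the quantitative no-hole bound of Theorem~\ref{space filling improved}; combined with lower bounds on dyadic annulus crossings derived from the same energy-entropy comparison (by inserting polygons of intermediate scales), this should yield the Aizenman--Burchard criterion. At criticality one would rely instead on the Kemppainen--Smirnov condition, verified via FKG-type monotonicity and Russo--Seymour--Welsh arguments for the loop representation.

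Given tightness, I would pass to a subsequential scaling limit $\gamma$ and prove convergence of $F_\delta$ to the explicit holomorphic function $F$ on $\Omega$ determined by the boundary data. The final step is then the classical martingale argument: conformally pulling $F$ back along the Loewner mapping-out maps $g_t$ of $\gamma$ gives a local martingale, and matching its drift and quadratic variation forces the Loewner driving process $W_t = \sqrt{\kappa}\,B_t$ with $\kappa=8/3$ or $\kappa=8$ according to $\sigma$.

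The principal obstacle, and the reason this statement remains a conjecture rather than a theorem, is the absence of an exact parafermionic identity for $x\neq 1/\mu$: Smirnov's edge-wise holomorphicity genuinely fails off-criticality, and no replacement observable is known in the supercritical regime. A second, still-open obstacle is upgrading pointwise discrete holomorphicity (even at criticality) to scaling-limit convergence of $F_\delta$ with correct boundary data --- the uniform regularity bounds needed to extract a convergent subsequence have not been established. Any serious attempt at the supercritical half of the conjecture would, in my view, have to combine the space-filling framework developed in this paper with a new integrable structure carrying approximate holomorphicity for $x>1/\mu$, or else directly characterise $\SLE_8$ as the unique space-filling conformally invariant limit via the averaged density $\theta(x)$ of Problem~3.
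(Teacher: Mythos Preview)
The statement you were asked to prove is labelled a \emph{Conjecture} in the paper, and indeed the paper offers no proof or even proof sketch: it is recalled from \cite{Sm2} as an open problem in the closing ``Questions'' section. There is therefore nothing in the paper to compare your proposal against.

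Your write-up is not a proof but a research programme, and you say so yourself when you identify the two principal obstacles (no exact parafermionic identity off criticality, and no scaling-limit convergence of the observable even at criticality). That is an honest assessment. A few remarks on the outline itself: the tightness step at $x=1/\mu$ via Kemppainen--Smirnov would require FKG and RSW inputs that are \emph{not} known for the self-avoiding walk measure --- this model has no obvious monotonicity, so that part of the plan is more speculative than it reads. In the supercritical regime, invoking Theorem~\ref{space filling improved} to control annulus crossings is reasonable in spirit, but Aizenman--Burchard tightness for a space-filling limit such as $\SLE_8$ is delicate (the limiting curve is not simple), and the no-hole estimate alone does not give the required six-arm-type bounds. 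Finally, your closing suggestion of characterising $\SLE_8$ directly as the unique conformally invariant space-filling limit would still need conformal invariance as an input, which is precisely what is missing. In short: the outline is a fair summary of how one \emph{would like} the argument to go, and you have correctly flagged the genuine gaps; the paper itself makes no further progress on this conjecture.
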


\paragraph{Acknowledgements.} The first author was supported by the ANR grant BLAN06-3-134462,
the EU Marie-Curie RTN CODY, the ERC AG CONFRA, as well as by the Swiss
{FNS}. The second author was supported by the Israel Science Foundation.

\bibliographystyle{amsalpha}
\def\cprime{$'$}
\providecommand{\bysame}{\leavevmode\hbox to3em{\hrulefill}\thinspace}
\providecommand{\MR}{\relax\ifhmode\unskip\space\fi MR }
\providecommand{\MRhref}[2]{%
  \href{http://www.ams.org/mathscinet-getitem?mr=#1}{#2}
}
\providecommand{\href}[2]{#2}

\begin{flushright}
\footnotesize\obeylines
  \textsc{Universit\'e de Gen\`eve}
  \textsc{Gen\`eve, Switzerland}
  \textsc{E-mail:} \texttt{hugo.duminil@unige.ch}
 \medbreak
    \textsc{Weizmann Institute}
  \textsc{Rehovot, Israel}
  \textsc{E-mail:} \texttt{gady.kozma@weizmann.ac.il}
\medbreak
   \textsc{Ben Gurion University}
  \textsc{Beer Sheva, Israel}
  \textsc{E-mail:} \texttt{yadina@bgu.ac.il}
\end{flushright}

\end{document}